\numberwithin{equation}{section}
\newtheorem{thm}{Theorem}[section]
\newtheorem{lemma}[thm]{Lemma}
\newtheorem{cor}[thm]{Corollary}
\newtheorem{prop}[thm]{Proposition}
\theoremstyle{definition}
\newtheoremstyle{miremark}{}{}{}{}{\bfseries}{.}{ }{}%
\theoremstyle{miremark}
\newtheorem{rmk}[thm]{Remark}
\begin{document}
\title{
  Superdifferential of the Takagi function   }

\author{Juan Ferrera}
\address{IMI, Departamento de An{\'a}lisis Matem{\'a}tico y
  Matem\'atica Aplicada,
Facultad Ciencias Ma\-te\-m{\'a}ticas, Universidad Complutense, 28040, Madrid, Spain}
\email{ferrera@mat.ucm.es}

\author{Javier G\'omez Gil}
\address{Departamento de An{\'a}lisis Matem{\'a}tico y
  Matem\'atica Aplicada,
Fa\-cul\-tad Ciencias Matem{\'a}ticas, Universidad Complutense, 28040, Madrid, Spain}
\email{gomezgil@mat.ucm.es}

\subjclass[2010]{Primary 26A27; Secondary 26A24, 49J52}

\keywords{Takagi function, Superdifferential }

\begin{abstract}
The Takagi function is a classical example of a continuous nowhere differentiable
function.  It has empty subdifferential except in a countable
set where its subdifferential is $\mathbb{R}$. 
In this paper we characterize its superdifferential.
\end{abstract}

\maketitle
\section{Introduction and preliminary}

The Takagi function is probably the easiest example of
a continuous nowhere derivable function.
Less known than Weierstrass function,
it was introduced in 1903 by T. Takagi  (see \cite{Takagi})  and ever since it has caught the interest
of mathematicians. 
For an extensive information about this function, see the surveys 
\cite{AK} and \cite{Lagarias}.

The Takagi function, $T:\mathbb{R}\to \mathbb{R}$, is usually defined as 
\begin{equation*}
T(x)=\sum_{n=0}^{\infty}\frac{1}{2^n}\phi (2^nx)
\end{equation*}
where $\phi (x)=\text{dist}(x,\mathbb{Z})$, the distance from $x$ to the nearest integer.

If $D$ denote the set of all dyadic
real numbers and  we decompose it as an increasing union of sets
\begin{equation*}
 D_n=\bigg\{ \frac{k}{2^{n-1}}:k\in \mathbf{Z}\bigg\}, \quad n=1,2,\dotsc
\end{equation*}
then we may also define the Takagi function as:
\begin{equation}
T(x)=\sum_{k=1}^{\infty} g_k(x)=\lim_n G_n(x), \quad(x\in\mathbb{R}),\label{eq:1}
\end{equation}
where $g_k(x)=\min (|x-y|:\, y\in D_k)$ is the distance of the point
$x$ to the closed set $D_k$,  and $G_n=g_1+\dots +g_n$.

Now we will introduce some non smooth definitions and results.
As a general reference for these concepts you may use  
\cite{F} or \cite{RW}.

For an upper semicontinuous function $f:\mathbb{R}\longrightarrow
\mathbb{R}$ and a point $x\in \mathbb{R}$, we define the
 Fréchet superdifferential of $f$ at $x$, $\partial^+ f(x)$, as the
 set of $\xi\in \mathbb{R}$ that satisfy
\begin{equation*}
\limsup_{h\to 0}\frac{f(x+h)-f(x)- \xi h}{|h|}\leq 0.
\end{equation*}
The function $f$ is said superdifferentible at $x$ if
$\partial^{+}f(x)\neq \varnothing$.

In a similar way, for a lower semicontinuous function $f:\mathbb{R}\longrightarrow
\mathbb{R}$ and a point $x\in \mathbb{R}$, the
 Fréchet subdifferential of $f$ at $x$, $\partial f(x)$, is defined as the set of $\xi\in\mathbb{R} $ that satisfy
\begin{equation*}
\liminf_{h\to 0}\frac{f(x+h)-f(x)- \xi h}{|h|}\geq 0.
\end{equation*}
If $\partial f(x)\neq \varnothing$, then $f$ is said subdifferentiable at $x$.
It is immediate that  $\partial^{+} f (x) = -\partial(-f )(x)$.

A continuous function $f$ is derivable at $x$ if and only if both $\partial f (x)$ and
$\partial^{+} f (x)$ are nonempty. In this case, 
$\partial f (x)=\partial^{+} f(x) = \{f'(x)\}$. 

 Next proposition characterizes the superdifferential of an upper semicontinuous function $f:\mathbb{R}\longrightarrow
 \mathbb{R}$ at a point $x$  in terms of the Dini derivatives:
\begin{equation*}
  \begin{gathered}[t]
    d_{-}f(x)=\liminf_{h\uparrow 0}\frac{f(x+h)-f(x)}{h}\\
D^{+}{f}(x)=\limsup_{h\downarrow 0}\frac{f(x+h)-f(x)}{h}.
  \end{gathered}
\end{equation*}

\begin{prop}
  \label{sec:introduction-3}
  An upper semicontinuous function  $f:\mathbb{R}\longrightarrow \mathbb{R}$ is
  superdifferentiable at a point $x$ if, and only if, 
  \begin{equation*}
D^+f(x)\leq
  d_-f(x)\quad\text{ and}\quad  \left[D^+f(x), \,d_-f(x)\right]\cap
  \mathbb{R}\neq\varnothing.
\end{equation*}
In this case, $\partial^+ f(x)=[D^+f(x),
  \,d_-f(x)]\cap \mathbb{R}$.
\end{prop}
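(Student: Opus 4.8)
The plan is to reduce the two–sided limsup defining $\partial^+f(x)$ to a pair of one–sided conditions and to match each with one of the two Dini derivatives. For a fixed $\xi\in\mathbb{R}$ put $\psi(h)=\dfrac{f(x+h)-f(x)-\xi h}{|h|}$. Since a two–sided limsup is the larger of its two one–sided counterparts,
\[
\limsup_{h\to 0}\psi(h)=\max\Big(\limsup_{h\downarrow 0}\psi(h),\ \limsup_{h\uparrow 0}\psi(h)\Big),
\]
so that $\xi\in\partial^+f(x)$ if and only if both one–sided limsups are $\le 0$. I would then evaluate each of them separately.

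For the right–hand side, $h>0$ gives $|h|=h$ and $\psi(h)=\dfrac{f(x+h)-f(x)}{h}-\xi$; taking $\limsup_{h\downarrow 0}$ produces $D^+f(x)-\xi$, so the right condition is equivalent to $\xi\ge D^+f(x)$. For the left–hand side, $h<0$ gives $|h|=-h$ and $\psi(h)=\xi-\dfrac{f(x+h)-f(x)}{h}$; using the identity $\limsup(-a_h)=-\liminf a_h$, the $\limsup_{h\uparrow 0}$ equals $\xi-d_-f(x)$, so the left condition is equivalent to $\xi\le d_-f(x)$.

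Combining the two equivalences gives
\[
\xi\in\partial^+f(x)\iff D^+f(x)\le \xi\le d_-f(x),
\]
which is exactly $\partial^+f(x)=[D^+f(x),\,d_-f(x)]\cap\mathbb{R}$, the intersection with $\mathbb{R}$ being forced by the requirement that the candidate slope $\xi$ be a real number. Consequently $\partial^+f(x)\neq\varnothing$ precisely when $D^+f(x)\le d_-f(x)$ and the resulting interval contains a real point, which is the stated criterion.

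The only delicate point is the bookkeeping in the extended reals when $D^+f(x)$ or $d_-f(x)$ equals $\pm\infty$: the identity $\limsup(-a_h)=-\liminf a_h$ and the two comparisons must be read there, and it is precisely the intersection with $\mathbb{R}$ that rules out superdifferentiability in the degenerate situations where the admissible interval collapses to a single infinite endpoint (for instance $D^+f(x)=d_-f(x)=+\infty$, or $=-\infty$). Everything else is elementary arithmetic of one–sided limits, and the upper semicontinuity hypothesis is not actually invoked in the argument.
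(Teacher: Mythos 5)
Your proof is correct, and it is the standard argument: splitting the two-sided $\limsup$ defining $\partial^+f(x)$ into its two one-sided parts, identifying the right-hand part with $D^+f(x)-\xi$ and the left-hand part with $\xi-d_-f(x)$, and handling the extended-real endpoint cases via the intersection with $\mathbb{R}$. Note that the paper itself states this proposition without proof, as a preliminary fact with \cite{F} and \cite{RW} as general references, so there is no in-paper argument to diverge from; your closing observations --- that the degenerate cases such as $D^+f(x)=d_-f(x)=+\infty$ are exactly what the clause $[D^+f(x),\,d_-f(x)]\cap\mathbb{R}\neq\varnothing$ excludes, and that upper semicontinuity is never actually used --- are both accurate.
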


\section{Main Theorem}
\label{sec:main-theorem}

Gora and Stern, see \cite{GS},  showed that $T$ has empty Fréchet subdifferential at every $x\not\in D$,
meanwhile its subdifferential is $\mathbb{R}$ at every $x\in D$. In
this section we
are going to characterize the superdifferential of $T$.

If $x\not\in D$,
for all $n$ there exist $x_n,y_n\in D_n$ such that
$x\in (x_n,y_n)$ and $(x_n,y_n)\cap D_n=\varnothing$. We denote
$c_n=\frac12\left(x_n+y_n\right)$. 
It is obvious that $c_n\in D_{n+1}$. Moreover, if  $x>c_n$ then $c_n=x_{n+1}$,
meanwhile  $c_n=y_{n+1}$ otherwise.

On the other hand,  for $k< n$ we have that either
$(x_n,y_n)\subset (x_k,c_k)\subset(x_k,y_k)$ or $(x_n,y_n)\subset (c_k,y_k)\subset(x_k,y_k)$.

\begin{prop}\label{sec:introduction-1}
  If $x\not\in D$ then
  \begin{equation*}
    \begin{aligned}
      d_-T(x)& \leq \liminf_n G'_{n}(x)+1, & \\ \ D^+T(x) & \geq
      \limsup_n G'_{n}(x)-1.&
    \end{aligned}
  \end{equation*}
\end{prop}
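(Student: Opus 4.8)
The plan is to estimate the two Dini derivatives by choosing, for each $n$, a convenient sequence of increments $h$ that lets me compare $T(x+h)$ with the polygonal approximation $G_n$. The key structural fact I would exploit is the nested-interval description just given: for $x \notin D$, the intervals $(x_n, y_n)$ shrink to $x$, each $g_k$ is affine (with slope $\pm 1$) on the relevant subintervals once $k$ is large, and $G_n$ is piecewise linear with $G_n'(x)$ well-defined because $x$ lies strictly inside $(x_n, y_n)$.

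First I would fix $n$ and look at the increment that moves $x$ toward the nearest dyadic endpoint of the generation-$n$ interval. Concretely, I would take $h$ so that $x+h$ hits (or approaches) one of $x_n, y_n$, or better the midpoint $c_n$; on such increments $G_n$ is exactly affine with slope $G_n'(x)$, so $G_n(x+h) - G_n(x) = G_n'(x)\,h$. The tail $T - G_n = \sum_{k>n} g_k$ is then the error term to control. Since each $g_k$ is $1$-Lipschitz and the tail is dominated by $\sum_{k>n} g_k$, I would bound the difference quotient of the tail by $1$ in absolute value, but more carefully: on the approach to $c_n$ the tail behaves like a rescaled copy of $T$ itself, and its one-sided difference quotients are squeezed between $-1$ and $+1$. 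This is what produces the $\pm 1$ correction terms in the statement.

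For the bound $d_-T(x) \le \liminf_n G_n'(x) + 1$, I would use a left increment approaching $x$ from below through a well-chosen dyadic-like point of generation $n$, write
\begin{equation*}
\frac{T(x+h)-T(x)}{h} = \frac{G_n(x+h)-G_n(x)}{h} + \frac{(T-G_n)(x+h)-(T-G_n)(x)}{h},
\end{equation*}
identify the first term as $G_n'(x)$, and bound the tail quotient above by $1$ (using that each tail increment contributes at most $|h|$ and they telescope against $h<0$). Taking $\liminf$ over $n$ of the resulting upper bounds for the $\liminf_{h\uparrow 0}$ gives the claim; the symmetric argument with a right increment and a lower bound of $-1$ on the tail yields $D^+T(x) \ge \limsup_n G_n'(x) - 1$.

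The main obstacle will be making the tail estimate sharp enough to give exactly $+1$ and $-1$ rather than some looser constant, and doing so with a single increment $h$ that is simultaneously good for the $G_n$ term (where I need exact affineness, hence $h$ tied to generation $n$) and for the tail term (where I need the self-similar scaling of $\sum_{k>n} g_k$ to kick in). I expect to handle this by choosing $h = \pm(c_n - x)$ or a comparable dyadic step so that $x+h \in D_{n+1}$, where $G_{n+1}$ and all further $g_k$ simplify; then the tail $\sum_{k>n} g_k$ evaluated at such a point is controlled by a geometric series whose normalized increment tends to the extreme values $\pm 1$, which is precisely where the correction constants originate. Verifying that these chosen increments indeed realize the $\liminf$/$\limsup$ in the Dini derivatives, and that no better increment forces a strict inequality in the wrong direction, is the delicate point.
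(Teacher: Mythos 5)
Your decomposition $T = G_n + \sum_{k>n} g_k$ with the tail difference quotient ``bounded by $1$ in absolute value'' is the step that fails, and it fails for a structural reason: the tail is a sum of \emph{infinitely many} $1$-Lipschitz functions, so ``each tail increment contributes at most $|h|$'' gives no bound at all after summation. The honest estimate is $\bigl|\sum_{k>n}\left(g_k(x+h)-g_k(x)\right)\bigr| \le \sum_{k>n}\min\left(|h|,2^{-k}\right)$, which is of order $|h|\log_2(1/|h|)$ when $|h|$ is small compared with $2^{-n}$ --- this unboundedness is precisely why $T$ is nowhere differentiable and why $D^+T=+\infty$ at dyadic points. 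Your proposed increments $h = x_n - x$ or $h = c_n - x$ do not satisfy $|h|\asymp 2^{-n}$: the point $x$ can sit arbitrarily close to $x_n$ relative to the scale $2^{-n}$, and then the tail quotient blows up (near a dyadic point, the tail quotient at scale $2^{-p}$ grows like $p-n$). Even when $|h|$ is comparable to the scale, the constant you get this way is $2$, not $1$. Worse, for the bound $d_-T(x)\le \liminf_n G'_n(x)+1$ the tail term along $h=x_n-x$ has the wrong sign: since $g_k(x_n)=0$ for all $k\ge n$, the quotient equals $G'_{n-1}(x)+\frac{1}{x-x_n}\sum_{k\ge n}g_k(x)$, i.e.\ $G'_{n-1}(x)$ \emph{plus} a nonnegative term that you would need to bound above, and near-dyadic $x$ defeats any such bound. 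Your remark that the tail is ``a rescaled copy of $T$ with one-sided difference quotients squeezed between $-1$ and $+1$'' is also false, since $T$ itself has unbounded one-sided difference quotients.

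The paper's proof avoids estimating the tail altogether: it evaluates $T$ at the \emph{reflected} point $x'_n = 2x_n - x$. Because each $g_k$ is even and $2^{-k+1}$-periodic, one has the exact cancellation $g_k(x'_n)=g_k(x)$ for all $k\ge n-1$, so $T(x'_n)-T(x)=\sum_{k<n-1}\left(g_k(x'_n)-g_k(x)\right)$ and the difference quotient equals $G'_{n-2}(x)$ on the nose (both $x'_n$ and $x$ lie in the same affine piece of each $g_k$, $k<n-1$, when $x_{n-1}<x_n$). The constant $+1$ then comes not from any tail estimate but from the combinatorial identity $g'_{n-1}(x)=-1$, valid exactly when the left endpoint has just moved, which gives $G'_{n-2}(x)=G'_{n-1}(x)+1$; the stagnant case $x_m<x_{m+1}=\dots=x_n$ is treated separately, where the intermediate slopes equal $+1$ and only decrease the quotient, preserving the bound after taking $\liminf$. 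Your idea of getting the $D^+$ inequality by symmetry is sound --- the paper does this via evenness of $T$ and $G_n$, using $D^+T(x)=-d_-T(-x)$ --- but without the reflection device the central estimate does not close, so the proposal has a genuine gap rather than a repairable roughness.
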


\begin{proof}  
   Let $x_n, y_n, c_n$ be as before.  
  The sequences  $(x_n)_n$ and $(y_n)_n$ converge to $x$. For every
  $n$ let $x'_n=2x_n-x$. As $g_k$ is an even
  periodic function of period $2^{-k+1}$, we have that
  $g_k(x'_k)=g_k(x)$ 
  for all $k\geq n-1$ and therefore
  \begin{equation}\label{eq:3}
    T(x'_n)-T(x)=\sum_{k<n-1} \left(g_k(x'_n)-g(x)\right)
  \end{equation}
for all $n$.

If $x_{n-1}<x_n$ then $c_{n-1}=x_n$ and $y_{n-1}=y_n$ and therefore
$g'_{n-1}(x)=-1$. Moreover,  as
$x'_n\in \left(x_{n-1},x_n\right)\subset
\left(x_{n-1},y_{n-1}\right)$, then  either
 $ x'_n,x\in (x_k,c_k) $ or $x'_n,x\in
(c_k,y_k)$ for every $k<n-1$,  therefore
\begin{equation*}
  g_k(x'_n)-g_k(x)=g'_k(x)(x'_n-x)
\end{equation*}
for all $k<n-1$ 
and thus
\begin{equation}
  \label{eq:13}
  \frac{T(x'_n)-T(x)}{x'_{n}-x}
  =G'_{n-2}(x)
  =G'_{n-1}(x)+1.
\end{equation}

If $x_{m}<x_{m+1}=\dotsb=x_{n-1}=x_{n}$ for some $m<n-1$, then
$x'_{m+1}=\dotsb=x'_{n-1}=x'_{n}$.  Hence, by \eqref{eq:13},
\begin{equation}
  \label{eq:5}
   \frac{T(x'_n)-T(x)}{x'_{n}-x}= \frac{T(x'_{m+1})-T(x)}{x'_{m+1}-x}
  =G'_{m}(x)+1 <G'_{n-1}(x)+1
\end{equation}
since $g'_k(x)=1$ for $k=m+1,\dots, n-1$.

It follows from
\eqref{eq:13} and \eqref{eq:5}
that
\begin{equation}\label{eq:8}
  d_-T(x)
  \leq
  \liminf_{n} \frac{T(x'_{n})-T(x)}{x'_{n}-x}  \leq 
 \liminf_n G'_{n}(x)+1.
\end{equation}

As $T$ is an even function
\begin{equation}
\begin{aligned}
  D^+T(x)=&  \limsup_{h\downarrow 0} \frac{T(x+h)-T(x)}{h} \\
  = &\limsup_{h\uparrow 0} \frac{T(-x+h)-T(-x)}{-h} \\
  =&-\liminf_{h\uparrow 0} \frac{T(-x+h)-T(-x)}{h}=-d_-T(-x)
\end{aligned}\label{eq:4}
\end{equation}
and therefore
\begin{align*}
  D^+T(x)= &-d_-T(-x)\geq - \liminf_n G'_{n}(-x)-1  \\ = &\limsup_n G'_{n}(x)-1
\end{align*}
because the functions $G_n$ are even also. 
\end{proof}

\begin{cor}
  \label{sec:introduction-2}
  Let $x\not\in D$ be.
  If  one of two limits, $\limsup_n G'_{n}(x)$ or
$\liminf_n G'_{n}(x)$, is infinite or  both limits are finite and
\begin{equation*}
\limsup_n G'_n(x)- \liminf_n G'_n(x)> 2
\end{equation*}
 then $\partial^+T(x)=\varnothing$.
\end{cor}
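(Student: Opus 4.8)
The plan is to derive the emptiness of $\partial^+ T(x)$ directly from the characterization in Proposition~\ref{sec:introduction-3}, feeding into it the two one-sided estimates supplied by Proposition~\ref{sec:introduction-1}. Recall that Proposition~\ref{sec:introduction-3} says $T$ is superdifferentiable at $x$ precisely when $D^+T(x)\le d_-T(x)$ \emph{and} the interval $[D^+T(x),d_-T(x)]$ meets $\mathbb{R}$; so to prove $\partial^+T(x)=\varnothing$ it suffices to violate one of these two requirements.

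First I would treat the finite case. Assuming both $\limsup_n G'_n(x)$ and $\liminf_n G'_n(x)$ are finite with difference strictly greater than $2$, I would combine the lower bound $D^+T(x)\ge \limsup_n G'_n(x)-1$ with the upper bound $d_-T(x)\le \liminf_n G'_n(x)+1$ to obtain
\begin{equation*}
D^+T(x)-d_-T(x)\ge \Big(\limsup_n G'_n(x)-1\Big)-\Big(\liminf_n G'_n(x)+1\Big)=\limsup_n G'_n(x)-\liminf_n G'_n(x)-2>0.
\end{equation*}
Thus $D^+T(x)>d_-T(x)$, which contradicts the first requirement of Proposition~\ref{sec:introduction-3}, and hence $\partial^+T(x)=\varnothing$.

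Next I would handle the infinite case, where the only subtlety is reading the inequalities in the extended reals. If $\limsup_n G'_n(x)=+\infty$, the lower bound forces $D^+T(x)=+\infty$, so $[D^+T(x),d_-T(x)]\cap\mathbb{R}=\varnothing$; symmetrically, if $\liminf_n G'_n(x)=-\infty$, the upper bound forces $d_-T(x)=-\infty$, again giving an empty real intersection. The two remaining possibilities reduce to these, since $\liminf\le\limsup$ always: $\liminf_n G'_n(x)=+\infty$ forces $\limsup_n G'_n(x)=+\infty$, and $\limsup_n G'_n(x)=-\infty$ forces $\liminf_n G'_n(x)=-\infty$. In every case the second requirement of Proposition~\ref{sec:introduction-3} fails and $\partial^+T(x)=\varnothing$.

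The computation is entirely routine; the nearest thing to an obstacle is the bookkeeping in the infinite case, namely checking that each of the four ways a limit can be infinite is matched to the correct failing hypothesis of Proposition~\ref{sec:introduction-3} and that the extended-real interval is interpreted so that its intersection with $\mathbb{R}$ is genuinely empty.
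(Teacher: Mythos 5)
Your proof is correct and follows essentially the same route as the paper: the finite case subtracts the two bounds of Proposition~\ref{sec:introduction-1} to get $D^+T(x)-d_-T(x)\geq \limsup_n G'_n(x)-\liminf_n G'_n(x)-2>0$, and the infinite case is exactly what the paper dismisses as ``immediate from Proposition~\ref{sec:introduction-3}.'' Your extra bookkeeping in the infinite case (reducing the four possibilities to $\limsup_n G'_n(x)=+\infty$ or $\liminf_n G'_n(x)=-\infty$ and checking the extended-real interval has empty intersection with $\mathbb{R}$) simply makes explicit what the paper leaves to the reader.
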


\begin{proof}
It is immediate from Proposition \ref{sec:introduction-3} that
$\partial^+T(x)=\varnothing$   provided that either  $\limsup_n G'_{n}(x)=+\infty$ or
$\liminf_n G'_{n}(x)=-\infty$.

If both limits are finite and
$\limsup_n G'_n(x)- \liminf_n G'_n(x)> 2$ then
\begin{equation*}
  D^+T(x)-d_-T(x)\geq \limsup_n G'_n(x)- \liminf_n G'_n(x)-2>0
\end{equation*}
and  therefore, by Proposition \ref{sec:introduction-3} again,  $\partial^+T(x)=\varnothing$. 
\end{proof}

Now  we are going to study the situation when  both  $\limsup_n G'_{n}(x)$ and
$\liminf_n G'_{n}(x)$ are finite  and
\begin{equation}\label{eq:7}
  \limsup_n G'_n(x)- \liminf_n G'_n(x)\leq 2.
\end{equation}

In this case, as $|g'_k(x)|=1$ for all $k$, the sequence $(G'_n(x))_n$ does not
converge in $\mathbb{R}$ and  $\limsup_n G'_n(x)- \liminf_n G'_n(x)\geq
1$ and,  as both limits are integer numbers, this difference can only take the values 1 and 2.

\begin{lemma}
  \label{sec:main-theorem-2}
  Let $x\in\mathbb{R}$ be. If $x=k+\sum_{n=1}^\infty a_n2^{-n}$ for
  some $k\in \mathbb{Z}$ and $a_n\in\{0,1\}$, $n=1,2,\dotsc$ then
  \begin{equation*}
      g_n(x)= \frac{a_{n}}{2^{n}}+\left(1-2a_{n}\right)\sum_{k\geq n+1}\frac{a_k}{2^k}
\end{equation*}
for all $n=1,2,\dotsc$.
\end{lemma}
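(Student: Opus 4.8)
The plan is to reduce the computation to locating $x$ inside a single cell of the grid $D_n$, using the two elementary invariances of $g_n(x)=\operatorname{dist}(x,D_n)$. Since $D_n=\{j/2^{n-1}:j\in\mathbb{Z}\}$ is a subgroup of $\mathbb{R}$, it is invariant under translation by any of its own elements, so $g_n(x)=g_n(x-p)$ whenever $p\in D_n$. In particular the integer $k$ lies in $D_n$, hence $g_n(x)=g_n(x-k)$ and I may assume $k=0$, i.e. $x=\sum_{m\geq1}a_m2^{-m}\in[0,1]$.

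Next I would split the expansion at level $n$, writing
\[
x=P+r,\qquad P=\sum_{m=1}^{n-1}a_m2^{-m},\qquad r=\frac{a_n}{2^n}+\sum_{m\geq n+1}\frac{a_m}{2^m}.
\]
The head $P$ is an integer multiple of $2^{1-n}$, so $P\in D_n$, and by the invariance above $g_n(x)=g_n(r)=\operatorname{dist}(r,D_n)$. The key quantitative observation is that the tail $t:=\sum_{m\geq n+1}a_m2^{-m}$ satisfies $0\le t\le 2^{-n}$, so $r$ lies in the single cell $[0,2^{1-n}]$ whose endpoints $0$ and $2^{1-n}$ are the only points of $D_n$ that can be closest to $r$. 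Consequently $g_n(r)=\min\{r,\,2^{1-n}-r\}$, and the midpoint of the cell is $2^{-n}$.

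Finally I would split on the value of $a_n$. If $a_n=0$ then $r=t\le 2^{-n}$, so $r$ is at most the midpoint, the nearest grid point is $0$, and $g_n(x)=r=\sum_{m\geq n+1}a_m2^{-m}$, which is the claimed value since then $1-2a_n=1$. If $a_n=1$ then $r=2^{-n}+t\ge 2^{-n}$, so $r$ is at least the midpoint, the nearest point is $2^{1-n}$, and $g_n(x)=2^{1-n}-r=2^{-n}-\sum_{m\geq n+1}a_m2^{-m}$, again matching the formula since $1-2a_n=-1$. The only point needing care is the tie $r=2^{-n}$ (arising when $a_n=1$ with all later digits $0$, or $a_n=0$ with all later digits $1$): there both endpoints are equidistant, but the two case formulas return the common value $2^{-n}$, so the result holds independently of which dyadic representation of $x$ one uses. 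I expect this boundary bookkeeping, together with the tail estimate $t\le 2^{-n}$ that pins down the nearest grid point, to be the only delicate steps; the rest is direct substitution.
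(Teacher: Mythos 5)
Your proof is correct and takes essentially the same approach as the paper: the paper's proof simply records the two case formulas $g_n(x)=\sum_{k\geq n+1}a_k2^{-k}$ (if $a_n=0$) and $g_n(x)=2^{-n}-\sum_{k\geq n+1}a_k2^{-k}$ (if $a_n=1$), and you derive exactly these by the natural argument (translation by $k$ and by the head $\hat{x}_n\in D_n$, then the tail bound $t\leq 2^{-n}$ locating the nearest point of $D_n$). Your explicit handling of the tie $r=2^{-n}$ and of non-unique dyadic expansions supplies details the paper leaves implicit, but it is the same computation.
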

\begin{proof}
Observe that
\begin{equation*}
 \begin{split}
      g_n(x)= &\left\{
        \begin{aligned}
          & \sum_{k=n+1}^\infty \frac{a_k}{2^k},\qquad & \text{ if
            $a_{n}=0$,} \\
          &\frac1{2^{n}}- \sum_{k=n+1}^\infty \frac{a_k}{2^k}, & \text{
            if $a_{n}=1$};
        \end{aligned}
      \right. \\[1em]
    \end{split}
\end{equation*}
\end{proof}

Note that if $x\not \in D$ then $g'_n(x)=1$ if and only if
$a_{n}=0$. In consequence, if $x\not\in D$,  $g'_{n}(x)=1-2a_{n}$.

\begin{lemma}
  \label{sec:main-theorem-3}
  If $x\in\mathbb{R}$ then
  \begin{equation}
    \label{eq:17}
    g_n(x)+g_{n+1}(x)\leq \frac1{2^{n}}
  \end{equation}
  for all $n$.

  Moreover, for  $x\not\in D$, $g_n(x)+g_{n+1}(x)=2^{-n}$ if and
  only if $g'_{n}(x)\neq g'_{n+1}(x)$.
\end{lemma}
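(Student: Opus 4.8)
The plan is to read both assertions directly off the closed-form expression for $g_n$ obtained in Lemma~\ref{sec:main-theorem-2}. I would fix a binary expansion $x=k+\sum_{m\ge1}a_m2^{-m}$ with $a_m\in\{0,1\}$ and abbreviate the tail by $S:=\sum_{m\ge n+2}a_m2^{-m}$, noting that $0\le S\le 2^{-(n+1)}$ since every $a_m$ lies in $\{0,1\}$. Using the two cases recorded inside the proof of Lemma~\ref{sec:main-theorem-2}, together with $\sum_{m\ge n+1}a_m2^{-m}=a_{n+1}2^{-(n+1)}+S$, one writes both $g_n(x)$ and $g_{n+1}(x)$ explicitly in terms of $a_n$, $a_{n+1}$ and $S$.

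The computation then splits according to the four possible values of the pair $(a_n,a_{n+1})$. When $a_n\ne a_{n+1}$ the two contributions involving $S$ cancel and one obtains $g_n(x)+g_{n+1}(x)=2^{-n}$ exactly; when $a_n=a_{n+1}$ one obtains $2S$ if both vanish and $2^{-n}-2S$ if both equal $1$. In each of the four cases the sum is at most $2^{-n}$ because $0\le S\le 2^{-(n+1)}$, which establishes the inequality \eqref{eq:17} for every $x\in\mathbb{R}$. This half requires no hypothesis on $x$ and is insensitive to the choice of expansion at dyadic points, since $g_n$ is itself well defined independently of the representation.

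For the equality statement I would invoke the remark following Lemma~\ref{sec:main-theorem-2}, namely $g'_n(x)=1-2a_n$ when $x\notin D$, so that $g'_n(x)\ne g'_{n+1}(x)$ is precisely the condition $a_n\ne a_{n+1}$. By the case analysis, $a_n\ne a_{n+1}$ forces $g_n(x)+g_{n+1}(x)=2^{-n}$, which gives one implication. Conversely, if $a_n=a_{n+1}$ the sum equals $2S$ or $2^{-n}-2S$, so equality with $2^{-n}$ would force $S=2^{-(n+1)}$ or $S=0$; the former means $a_m=1$ for all $m\ge n+2$ and the latter means $a_m=0$ for all $m\ge n+2$, and in either case the tail makes $x$ dyadic, contradicting $x\notin D$. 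Hence for $x\notin D$ the sum is strictly below $2^{-n}$ whenever $a_n=a_{n+1}$, completing the equivalence.

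The only delicate point is the non-uniqueness of the binary expansion at dyadic numbers, which is exactly why the equality characterization is restricted to $x\notin D$: there the expansion is unique and the strict inequalities above are genuine, whereas for the bare inequality the ambiguity is harmless. Accordingly I expect the case bookkeeping to be entirely routine, and the handling of the dyadic representations to be the only step deserving explicit comment.
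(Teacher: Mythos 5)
Your proposal is correct and follows essentially the same route as the paper: both read $g_n(x)+g_{n+1}(x)$ off the closed form in Lemma~\ref{sec:main-theorem-2} and analyze cases according to the bits $a_n,a_{n+1}$, with the tail $S=\sum_{k\geq n+2}a_k2^{-k}$ controlling equality; the paper merely compresses your four cases into the single identity \eqref{eq:15} via the coefficients $a_n+a_{n+1}-a_na_{n+1}$ and $a_n+a_{n+1}-1$. Your explicit observation that equality with $a_n=a_{n+1}$ would force $S=0$ or $S=2^{-(n+1)}$ and hence $x\in D$ is exactly the point the paper leaves implicit, so nothing is missing.
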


\begin{proof}
  With the notations of  Lemma \ref{sec:main-theorem-2},
  \begin{multline}\label{eq:15}
    g_n(x)+g_{n+1}(x)= \\ =\frac{a_{n}}{2^{n}}+\left(1-2a_{n}\right)\sum_{k\geq
          n+1}\frac{a_k}{2^k}+
        \frac{a_{n+1}}{2^{n+1}}+\left(1-2a_{n+1}\right)\sum_{k\geq
          n+2}\frac{a_k}{2^k} \\ =
        \frac1{2^{n}}\left(a_{n}+a_{n+1}- a_{n}a_{n+1}\right) -
        2\left(a_{n}+a_{n+1}-1\right)\sum_{k\geq
          n+2}\frac{a_k}{2^k}.
      \end{multline}
      If $a_{n}+a_{n+1}=0$ then $a_n=a_{n+1}=0$ and therefore
    \begin{equation*}
      g_n(x)+g_{n+1}(x)=2\sum_{k\geq
          n+2}\frac{a_k}{2^k} \leq 2 \sum_{k\geq
          n+2}\frac{1}{2^k}= \frac1{2^{n}}
    \end{equation*}
  with strict inequality  if $x\not\in D$.
If $a_{n}+a_{n+1}\geq 1$
 then $a_{n}+a_{n+1}- a_{n}a_{n+1}=1$ and
\eqref{eq:17} is an immediate consequence of \eqref{eq:15}. Moreover, if
$x\not\in D$,  the equality
holds if and only if $a_{n}+a_{n+1}-1=0$. But $a_{n}+a_{n+1}-1=0$
if and only if
$a_{n}\neq a_{n+1}$.
\end{proof}

\begin{prop}
  \label{sec:main-theorem-1}
   Let $x\not\in D$ be. 
   If $\liminf_n G'_n(x)$ and $\limsup_n G'_n(x)$ are finite and
   \begin{equation*}
     \limsup_n G'_n(x)-\liminf_n G'_n(x)\in \{1,2\}
   \end{equation*}
   then there exists $m\geq 1$ such
    that $a_{m+2i}+a_{m+2i+1}=1$ for all $i\geq 0$ and 
  \begin{equation}
    \label{eq:10}
     d_-T(x) =
 \liminf_n G'_{n}(x)+1     
  \end{equation}
and
  \begin{equation}
    \label{eq:11}
    D^+T(x) = \limsup_n G'_{n}(x)-1.
  \end{equation}
\end{prop}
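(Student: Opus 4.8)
The plan is to establish the three assertions in order: the structural pairing first, then the value of $d_-T(x)$, and finally $D^+T(x)$ by the even symmetry of $T$. Throughout write $L=\liminf_n G'_n(x)$ and $U=\limsup_n G'_n(x)$.

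For the structural claim I would start from the \emph{parity} identity $G'_n(x)=\sum_{k\le n}(1-2a_k)=n-2\sum_{k\le n}a_k\equiv n\pmod 2$, so the parity of $G'_n(x)$ is forced by that of $n$. Since $L,U$ are integers with $U-L\in\{1,2\}$, there is $N_0$ with $L\le G'_n(x)\le U$ for $n\ge N_0$. If $U-L=2$ the only value of $\{L,L+1,L+2\}$ with parity opposite to $L$ is $L+1$, so $G'_n(x)=L+1$ for every $n\ge N_0$ of the parity opposite to $L$; if $U-L=1$ the two admissible values have opposite parities and $G'_n(x)$ is again pinned by the parity of $n$. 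In either case, choosing $m\ge N_0$ of a suitable parity, $G'_n(x)$ is constant, say $=c$, along $m-1,m+1,m+3,\dots$. Then $g'_{m+2i}(x)+g'_{m+2i+1}(x)=G'_{m+2i+1}(x)-G'_{m+2i-1}(x)=0$, and as each $g'_k(x)=\pm1$ this gives $a_{m+2i}+a_{m+2i+1}=1$ for all $i\ge0$.

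For the derivative formulas, Proposition \ref{sec:introduction-1} already gives $d_-T(x)\le L+1$ and $D^+T(x)\ge U-1$, so only the reverse inequalities are needed. Because $T$ and every $G_n$ are even, $D^+T(x)=-d_-T(-x)$ and $G'_n(-x)=-G'_n(x)$, and $-x$ satisfies the same hypotheses with $\limsup$ and $\liminf$ exchanged; hence \eqref{eq:11} follows from \eqref{eq:10} applied at $-x$, and everything reduces to proving $d_-T(x)\ge L+1$. Here is the mechanism. Put $\Phi_n=T-G_{n-1}=\sum_{k\ge n}g_k\ge0$. For $h<0$ small and any $n$ with $x,x+h\in(x_n,y_n)$ each $g_k$ with $k\le n-1$ is affine on $(x_n,y_n)$ (its breakpoints lie in $D_{k+1}\subseteq D_n$), so
\[
\frac{T(x+h)-T(x)}{h}=G'_{n-1}(x)+\frac{\Phi_n(x+h)-\Phi_n(x)}{h}.
\]
The engine is Lemma \ref{sec:main-theorem-3}: grouping the tail in pairs, $\Phi_n(t)=\sum_{j\ge0}\bigl(g_{n+2j}(t)+g_{n+2j+1}(t)\bigr)\le\sum_{j\ge0}2^{-(n+2j)}$ for every $t$, and by the pairing just established this bound is \emph{attained} at $t=x$ whenever $n$ is aligned with the pairing (e.g. at every $n=m+2i$). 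At such an aligned scale $\Phi_n(x)=\max\Phi_n$, so $\Phi_n(x+h)-\Phi_n(x)\le0$; as $h<0$ the last quotient is $\ge0$ and thus $\tfrac{T(x+h)-T(x)}{h}\ge G'_{n-1}(x)$.

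It then remains to exhibit, for each small $h<0$, an aligned scale $n\le N(h)$ with $G'_{n-1}(x)\ge L+1$, where $N(h)$ is the largest index with $x+h\in(x_n,y_n)$. A short analysis of $N(h)$ shows $a_{N(h)}=1$ and $x+h\in(x_{N(h)},c_{N(h)}]$. When $U-L=2$ or the tail pairs are eventually $(1,0)$ one has $L+1=c$, and any aligned start-of-pair scale $n=m+2i\le N(h)$ gives $G'_{n-1}(x)=c$; when they are eventually $(0,1)$ one has $L+1=c+1$ and the digits are eventually $2$-periodic, so $n=N(h)$ is itself aligned with $G'_{N(h)-1}(x)=c+1$. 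In every case the difference quotient is $\ge L+1$, giving \eqref{eq:10}, and then \eqref{eq:11} by the reduction. The main obstacle I anticipate is exactly this last bookkeeping — deciding which scales are aligned (so Lemma \ref{sec:main-theorem-3} forces $\Phi_n(x)=\max\Phi_n$) and checking that for every admissible $h$ one of them lies below $N(h)$ with the right value of $G'_{n-1}(x)$; the parity argument and the decomposition are routine, but matching the scale to $h$ so that the maximality of the tail supplies the extra $+1$ is where the care is needed.
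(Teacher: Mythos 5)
Your proof is correct and follows essentially the same route as the paper's: the same parity argument produces the pairing $a_{m+2i}+a_{m+2i+1}=1$, the same decomposition of the left difference quotient into $G'_{n-1}(x)$ plus a tail whose aligned pair sums are maximal at $x$ by Lemma \ref{sec:main-theorem-3} (the paper's inequality \eqref{eq:20}), and the same even-symmetry reduction $D^{+}T(x)=-d_{-}T(-x)$ for \eqref{eq:11}. The only (harmless) divergence is in the endgame bookkeeping: the paper normalizes the alignment by assuming $G'_{m}(x)=I$, so that at a misaligned stopping scale $G'_{n-2}(x)=I+2$ and the single straddling term $g_{n-1}$ is absorbed by its $1$-Lipschitz bound, whereas you instead stop at the nearest aligned scale (re-aligning the pairing at $N(h)$ in the eventually $(0,1)$ case), which works equally well.
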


\begin{proof}
  Let  $S=\limsup_n G'_n(x)$ and $I= \liminf_n G'_n(x)$ be. There
  exists an integer $m> 0$ such that 
   $I\leq G'_n(x)\leq
  S$ for all $n\geq m-1$.  We can assume, without loss of
  generality, that $G'_{m}(x)=I$. This implies in particular that
  $G'_{m-1}(x)=I+1$ and therefore that
  $|G'_{n}(x)-G'_{m-1}(x)|\leq 1$ for all $n\geq m$ .  As $g'_{j}(x)+g'_{j+1}(x)$
  is an even integer for al $j\geq 0$, then
  \begin{equation}\label{eq:12}
    G'_{m+2i+1}(x)-G'_{m-1}(x)=
    \sum_{j=1}^{i} \left( g'_{m+2j}(x)+g'_{m+2j+1}(x)\right)=0
  \end{equation}
for all $i\geq 0$. In particular
\begin{multline}
\label{eq:9}
  g'_{m+2i+1}(x)+g'_{m+2i}(x) \\
  =
  G'_{m+2i+1}(x)-G'_{m-1}(x)-\left(G'_{m+2i-1}(x)-G'_{m-1}(x)
  \right)=0
\end{multline}
for all $i\geq 0$.

By  Lemma \ref{sec:main-theorem-3} 
\begin{equation}\label{eq:18}
  g_{m+2i+1}(x)+g_{m+2i}(x)=\frac1{2^{m+2i}},
\end{equation}
for all $i\geq 0$,
and thus, again by  Lemma \ref{sec:main-theorem-3}, 
\begin{equation*}
  g_{m+2i+1}(x)+g_{m+2i}(x)-
  \left(g_{m+2i+1}(x')+g_{m+2i}(x')\right)\geq   0.
\end{equation*}
for all $x'\in \mathbb{R}$.
This implies that
\begin{multline}\label{eq:20}
  \sum_{j\geq m+2p}\left(g_j(x)-g_j(x')\right) \\ =
  \sum_{i\geq p}
  \left[g_{m+2i+1}(x)+g_{m+2i}(x)-
    \left(g_{m+2i+1}(x')+g_{m+2i}(x')\right) \right] \geq 0
\end{multline}
for all $p\geq 0$.

  Let 
  $x_n, y_n, c_n$ be as before.
 If $x'\in (x_{m},x)$ then there  exists
 $n>m$ such that $x_{n-1}< x'\leq x_n$.  
This implies that $x_n=c_{n-1}$ and thus $g'_{n-1}(x)=-1$ and 
$G'_{n-2}(x)=G'_{n-1}(x)+1\geq I+1$. Therefore
 \begin{align}\label{eq:19}
   \frac{T(x')-T(x)}{x'-x}
   = &G'_{n-2}(x) +\frac1{x'-x}\,\sum_{j\geq
       n-1}\left(g_j(x')-g_j(x)\right)  \notag  \\
   \geq & I+1
   +\frac{1}{x-x'}\,\sum_{j\geq n-1}\left(g_j(x)-g_j(x')\right). 
 \end{align}

  If $n=m+2p+1$ for some $p\geq 0$ 
  then, 
  by \eqref{eq:20},
  \begin{equation*}
   \frac{T(x')-T(x)}{x'-x}
   \geq I+1.
 \end{equation*}
 
  If $n=m+2p$ for some $p> 0$ then, by \eqref{eq:12},
  $G'_{n-2}(x)=I+2$, and  as
  \begin{equation*}
    \left|g_{n-1}(x')-g_{n-1}(x)\right|\leq |x'-x|
  \end{equation*}
 then
\begin{multline*}  
   \frac{T(x')-T(x)}{x'-x}
   =G'_{n-2}(x) +\frac1{x'-x}\,\sum_{j\geq n-1}\left(g_j(x')-g_j(x)\right)  \\
   =I+2+\frac{g_{n-1}(x')-g_{n-1}(x)}{x'-x}
   +\frac1{x'-x}\,\sum_{j\geq n}\left(g_j(x')-g_j(x)\right) \\ 
   \geq I+1    
 \end{multline*}
by \eqref{eq:20}.
In both cases, by applying Proposition \ref{sec:introduction-1}, we obtain
\eqref{eq:10}. Relation \eqref{eq:11} follows from the equality $D^+T(x)=-d_-T(-x)$.
\end{proof}

\begin{thm}
  \label{sec:main-theorem-4}
  Let $x \in \mathbb{R}$ be, $x=k+\sum_{n=1}^\infty a_n2^{-n}$ for
  some $k\in \mathbb{Z}$, with $a_n\in\{0,1\}$, $n=1,2,\dotsc$.
  \begin{enumerate}
  \item \label{item:1} If $x\in D$ then $\partial^+T(x)=\varnothing$.
  \item \label{item:2} If $x\not\in D$ and there exists $m\in \mathbb{Z}$, $m\geq 1$, such
    that $a_{n}+a_{n+1}=1$ for all $n\geq m$, then
    \begin{equation*}
      \partial^+T(x)=\left\{
          \begin{aligned}
           &  m-1- 2\sum_{j=1}^{m-1}  a_j+\,[0,\,1\,]\quad &
             \text{ if $a_{m}=0$,} 
             \\[.5em]
           & m-1- 2\sum_{j=1}^{m-1}  a_j+\,[\,-1,\,0\,] &\text{ if $a_{m}=1$.} 
          \end{aligned}
\right.
\end{equation*}
\item \label{item:3} If $x\not\in D$ does not satisfy (\ref{item:2})
  but there  exists $m\geq 1$, such
    that $a_{m+2i}+a_{m+2i+1}=1$ for all $i\geq 0$, then
    \begin{equation*}
\partial^+T(x)=\left\{m-1- 2\sum_{j=1}^{m-1} a_j, \right\}
\end{equation*}
\item If $x\not\in D$ and does not verify  (\ref{item:2}) and        
  (\ref{item:3}) then  $\partial^+T(x)=\varnothing$.
  \end{enumerate}
\end{thm}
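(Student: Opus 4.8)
The plan is to route all four cases through the single quantity $\delta=\limsup_n G'_n(x)-\liminf_n G'_n(x)$, using throughout that $g'_k(x)=1-2a_k$, hence $G'_n(x)=n-2\sum_{k=1}^{n}a_k$, for $x\notin D$, and recalling from the discussion following Corollary \ref{sec:introduction-2} that $\delta\geq1$ whenever $x\notin D$. Part (\ref{item:1}) I would settle from abstract principles: for $x\in D$ the theorem of Gora and Stern (\cite{GS}) gives $\partial T(x)=\mathbb{R}\neq\varnothing$, so if $\partial^+T(x)$ were also nonempty then, by the differentiability criterion recalled in Section 1, $T$ would be derivable at $x$ with $\partial T(x)=\{T'(x)\}$ a singleton, contradicting $\partial T(x)=\mathbb{R}$; hence $\partial^+T(x)=\varnothing$.

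For $x\notin D$ the whole statement reduces to computing $\delta$. By Corollary \ref{sec:introduction-2}, $\partial^+T(x)=\varnothing$ as soon as $\delta=\infty$ or $\delta>2$. If instead $\delta\in\{1,2\}$, then Proposition \ref{sec:main-theorem-1} gives $d_-T(x)=\liminf_n G'_n(x)+1$ and $D^+T(x)=\limsup_n G'_n(x)-1$, and Proposition \ref{sec:introduction-3} yields
\begin{equation*}
\partial^+T(x)=\left[\,\limsup_n G'_n(x)-1,\ \liminf_n G'_n(x)+1\,\right],
\end{equation*}
an interval of length $2-\delta$. So it remains to match $\delta$ against the digit patterns and to evaluate the two endpoints.

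The core is the translation between $\delta$ and the digits. Since consecutive partial sums differ by $g'_{n+1}(x)=\pm1$, one has $\delta=1$ if and only if $(G'_n(x))_n$ eventually oscillates between two consecutive integers, which forces $g'_n(x)+g'_{n+1}(x)=0$, i.e. $a_n+a_{n+1}=1$, for all large $n$; this is exactly condition (\ref{item:2}), and conversely (\ref{item:2}) produces such an oscillation. Writing the two attained values as $G'_{m-1}(x)$ and $G'_m(x)=G'_{m-1}(x)+g'_m(x)$, splitting on the sign of $g'_m(x)=1-2a_m$, and substituting $G'_{m-1}(x)=m-1-2\sum_{j=1}^{m-1}a_j$ into the displayed interval reproduces the two formulas of (\ref{item:2}). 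For $\delta=2$: Proposition \ref{sec:main-theorem-1} already supplies an $m$ with $a_{m+2i}+a_{m+2i+1}=1$ for all $i\geq0$, which is condition (\ref{item:3}), while (\ref{item:2}) must fail since $\delta\neq1$; conversely, condition (\ref{item:3}) confines the tail of $G'_n(x)$ to $\{G'_{m-1}(x)-1,\,G'_{m-1}(x),\,G'_{m-1}(x)+1\}$, so $\delta\leq2$, and together with the failure of (\ref{item:2}), which rules out $\delta=1$, this forces $\delta=2$. The interval then collapses to the single point $\liminf_n G'_n(x)+1=G'_{m-1}(x)=m-1-2\sum_{j=1}^{m-1}a_j$, giving (\ref{item:3}). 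Assembling these equivalences, $\delta\in\{1,2\}$ holds precisely when (\ref{item:2}) or (\ref{item:3}) holds, so the simultaneous failure of both is exactly the regime $\delta>2$ or $\delta=\infty$, whence $\partial^+T(x)=\varnothing$, which is the last case.

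The step I expect to be most delicate is the $\delta=2$ analysis. One must check that when (\ref{item:3}) holds but (\ref{item:2}) fails, the even-offset derivative $g'_{m+2i}(x)$ takes each value $\pm1$ for infinitely many $i$, so that both $G'_{m-1}(x)-1$ and $G'_{m-1}(x)+1$ are attained infinitely often and indeed $\liminf_n G'_n(x)=G'_{m-1}(x)-1$, $\limsup_n G'_n(x)=G'_{m-1}(x)+1$; the case of an eventually constant sign collapses back into (\ref{item:2}). One must also verify that the point in (\ref{item:3}) and the interval in (\ref{item:2}) do not depend on the admissible $m$ chosen: for (\ref{item:3}) the relation $a_m+a_{m+1}=1$ makes the formula invariant under $m\mapsto m+2$, and a similar check handles $m\mapsto m+1$ in (\ref{item:2}).
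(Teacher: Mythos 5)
Your proposal is correct, and for parts (\ref{item:2})--(4) it follows essentially the paper's own route: both arguments feed the digit dictionary $g'_n(x)=1-2a_n$ into Proposition \ref{sec:main-theorem-1} and Proposition \ref{sec:introduction-3} to obtain $\partial^+T(x)=\left[\limsup_n G'_n(x)-1,\ \liminf_n G'_n(x)+1\right]$ when the oscillation $\delta$ equals $1$ or $2$, and dispose of the remaining regime via Corollary \ref{sec:introduction-2}. Your ``delicate step'' in case (\ref{item:3}) --- that an eventually constant sign of $g'_{m+2i}(x)$ would collapse back into condition (\ref{item:2}), so both signs occur infinitely often and $\liminf_n G'_n(x)=G'_{m-1}(x)-1$, $\limsup_n G'_n(x)=G'_{m-1}(x)+1$ --- is exactly what the paper asserts in one line (``since $x$ does not satisfy (\ref{item:2})''), and your explicit verification of it, together with the check that the formulas are independent of the admissible $m$, is if anything more careful than the printed text. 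The one genuine divergence is part (\ref{item:1}): the paper argues directly, computing $T(x+2^{-p})-T(x)=\bigl(\sum_{j\leq n}g'_j(x)+(p-n)\bigr)2^{-p}$ for $x\in D_n$ to conclude $D^+T(x)=+\infty$ and then invoking Proposition \ref{sec:introduction-3}, whereas you import the Gora--Stern result ($\partial T(x)=\mathbb{R}$ on $D$) and the fact recalled in Section 1 that simultaneous nonemptiness of $\partial T(x)$ and $\partial^+T(x)$ forces derivability with singleton subdifferential, contradicting $\partial T(x)=\mathbb{R}$. Your version is shorter and legitimate given the paper's preliminaries, but it leans on the cited external theorem where the paper's computation is elementary and yields the stronger quantitative fact $D^+T(x)=+\infty$ at dyadic points. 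One cosmetic caution: your shorthand ``$\delta=\infty$'' does not literally cover the case $\liminf_n G'_n(x)=\limsup_n G'_n(x)=\pm\infty$, where $\delta$ is undefined; what you actually use, correctly, is the hypothesis of Corollary \ref{sec:introduction-2} that one of the two limits is infinite, so the wording, not the logic, needs adjusting.
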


\begin{proof}

If $x\in D_{n}$ then for all integer  $p\geq n+1$
  \begin{equation*}
    T\left(x+\frac1{2^{p}}\right)-T(x)= \sum_{j=1}^{p} g'_{j}(x)\frac1{2^{p}}  = 
    \left(\sum_{j=1}^{n} g'_{j}(x)+ (p-n)\right)\,\frac1{2^{p}}
  \end{equation*}
and therefore
  \begin{equation*}
    D^{+}T(x)\geq \limsup_{p}\frac{T(x+2^{-p})-T(x)}{2^{-p}}=+\infty.
  \end{equation*}
It follows from \ref{sec:introduction-3} that
$\partial^{+}T(x)=\varnothing$.

If $x\not\in D$ and there exists $m\in \mathbb{Z}$, $m\geq 1$, such
    that $a_{n}+a_{n+1}=1$ for all $n\geq m$, then $g'_{n}(x)+g'_{n+1}(x)=0$ 
    for all $n\geq m$ and therefore
    \begin{equation*}
        G'_{m+2i+1}(x)=G'_{m-1}(x)+\sum_{j=0}^{i}
        \left(g'_{m+2j}(x)+g'_{m+2j+1}(x)\right) = G'_{m-1}(x)
    \end{equation*}
and
\begin{equation*}
  \begin{split}
    G'_{m+2i}(x)= &G'_{m}(x)+\sum_{j=1}^{i}
    \left(g'_{m+2j-1}(x)+g'_{m+2j}(x)\right) \\= &G'_{m}(x)=G'_{m-1}(x)+g'_{m}(x)
  \end{split}
\end{equation*}
for all $i\geq 1$.

If $a_{m}=0$ then $g'_{m}(x)=1$ and
\begin{equation*}
\liminf_{n} G'_{n}(x)= G'_{m-1}(x), \quad \limsup_{n} G'_{n}(x)=G'_{m-1}(x)+1.
\end{equation*}
Hence, by Proposition \ref{sec:main-theorem-1} and Proposition \ref{sec:introduction-3}, 
\begin{equation*}
  \partial^{+}T(x)=G'_{m-1}(x)+[0,1]=\sum_{j=1}^{m-1}(1-2a_{j})+[0,1].
\end{equation*}

Alternatively, if $a_{m}=1$ then  $g'_{m}(x)=-1$ and
\begin{equation*}
\liminf_{n} G'_{n}(x)= G'_{m-1}(x)-1, \quad \limsup_{n} G'_{n}(x)=G'_{m-1}(x)
\end{equation*}
and, by Propositions \ref{sec:main-theorem-1} and \ref{sec:introduction-3}, 
\begin{equation*}
  \partial^{+}T(x)=G'_{m-1}(x)+[-1,0]=\sum_{j=1}^{m-1}(1-2a_{j})+[-1,0].
\end{equation*}

In order to see (\ref{item:3}):
if $x$ satisfies $g'_{m+2i}(x)+g'_{m+2i+1}(x)=0$ for all
$i\geq 0$, then 
\begin{equation*}
  G'_{m+2n-1}(x)=G'_{m-1}(x)+\sum_{i=0}^{n}
        \left(g'_{m+2i}(x)+g'_{m+2i+1}(x)\right)=G'_{m-1}(x)
\end{equation*}
and
\begin{equation*}
  G'_{m+2n}(x)=G'_{m+2n-1}(x)+g'_{m+2n}(x)=G'_{m-1}(x)+g'_{m+2n}(x).
\end{equation*}
for all $n\geq 1$. 

On the other hand 
$\liminf_{n}g'_{m+2n}(x)=-1$ and $\limsup_{n}g'_{m+2n}(x)=1$ 
since $x$ does not satisfy (\ref{item:2}). Therefore
\begin{equation*}  
  \begin{gathered}
    \liminf_{n} G'_{n}(x)=G'_{m-1}(x)-1\\
    \limsup_{n} G'_{n}(x)=G'_{m-1}(x)+1
  \end{gathered}
\end{equation*} 
It follows from Propositions \ref{sec:introduction-3} and
\ref{sec:introduction-1} that $\partial^{+}T(x)=\{G'_{m-1}(x)\}$.

Finally, if $x\not\in D$  does not verify  (\ref{item:2}) and        
  (\ref{item:3}) the result follows from Proposition \ref{sec:main-theorem-1} and
Corollary  \ref{sec:introduction-2}.
\end{proof}

\begin{rmk}
The set $A$ of points that satisfy condition $(2)$ is a countable dense subset of $\mathbb{R}$
\end{rmk}
\begin{proof}
$A$ is clearly countable, since it is the set of points where a function on $\mathbb{R}$ is superdifferentiable,
with superdifferential not a singleton. However, directly, it is not difficult to see that
\begin{equation*}
  A=\bigcup_{m=1}^{\infty}\left(\frac43\,\frac1{2^{m}}+D_{m}\right)
\end{equation*}
which give us the result.
\end{proof}

In order to study the set of points where the Takagi function is superdifferentiable,
we will take advantage of the following result, due to Kahane, (see \cite{K} and  \cite{AK}),
which describes the set $\mathcal{M}$ where the Takagi function attains its maximum value, which is 
$\frac{2}{3}$.

\begin{thm}\label{Kahane}
$\mathcal{M}$ is a set of Hausdorff dimension $\frac{1}{2}$, and consists of all the points $x$ with 
binary expansion satisfying $a_{2n-1}+a_{2n}=1$ for every $n\geq 1$.
\end{thm}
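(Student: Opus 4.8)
The plan is to separate the two assertions: the combinatorial description of $\mathcal{M}$ together with the value $\tfrac23$, and the computation of the Hausdorff dimension. Since $\phi$ has period $1$, so does $T$, and it suffices to argue on $[0,1)$, describing each point by its binary digits $x=\sum_{n\ge1}a_n2^{-n}$.

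For the description of $\mathcal{M}$ I would group the series \eqref{eq:1} into consecutive pairs and apply Lemma \ref{sec:main-theorem-3}. Writing $T(x)=\sum_{n=1}^{\infty}\bigl(g_{2n-1}(x)+g_{2n}(x)\bigr)$ and using \eqref{eq:17} with index $2n-1$ yields
\[
T(x)\le\sum_{n=1}^{\infty}\frac{1}{2^{2n-1}}=\frac{2}{3},
\]
so $\max T=\tfrac23$. A maximizer must realize equality in every pair, i.e. $g_{2n-1}(x)+g_{2n}(x)=2^{-(2n-1)}$ for all $n$; by the equality clause of Lemma \ref{sec:main-theorem-3} and the identity $g'_n(x)=1-2a_n$ recorded after Lemma \ref{sec:main-theorem-2}, this is equivalent to $a_{2n-1}\ne a_{2n}$, that is $a_{2n-1}+a_{2n}=1$, for every $n\ge1$. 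Conversely these conditions turn all the pairwise inequalities into equalities and give $T(x)=\tfrac23$. The only point needing care is that the equality clause of Lemma \ref{sec:main-theorem-3} is stated for $x\notin D$; this is harmless, because any $x$ satisfying $a_{2n-1}+a_{2n}=1$ for all $n$ has infinitely many nonzero digits and is thus non-dyadic, whereas for $x\in D$ the pairs vanish from some index on, forcing $T(x)<\tfrac23$.

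For the dimension I would recognize $\mathcal{M}\cap[0,1]$ as the attractor of an iterated function system. Each admissible pair $(a_{2n-1},a_{2n})$ is either $(0,1)$ or $(1,0)$, and shifting the digit sequence by two places shows that $\mathcal{M}\cap[0,1]$ is a nonempty compact set satisfying
\[
\mathcal{M}\cap[0,1]=f_1\bigl(\mathcal{M}\cap[0,1]\bigr)\cup f_2\bigl(\mathcal{M}\cap[0,1]\bigr),\qquad f_1(y)=\tfrac14y+\tfrac14,\quad f_2(y)=\tfrac14y+\tfrac12,
\]
two similarities of ratio $\tfrac14$. Their full-interval images $f_1([0,1])=[\tfrac14,\tfrac12]$ and $f_2([0,1])=[\tfrac12,\tfrac34]$ have disjoint interiors, so with $U=(\tfrac14,\tfrac34)$ the open set condition is satisfied. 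Moran's similarity-dimension theorem then gives $\dim_H\mathcal{M}=s$, where $2\cdot(\tfrac14)^s=1$, that is $s=\frac{\log2}{\log4}=\frac12$.

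The routine part is the combinatorial identification of $\mathcal{M}$, which is immediate once Lemma \ref{sec:main-theorem-3} is in hand. The delicate step is the dimension computation: one must confirm that $\mathcal{M}\cap[0,1]$ is genuinely the self-similar attractor and verify the open set condition before invoking the similarity-dimension formula. Here the two pieces $f_1(\mathcal{M}\cap[0,1])$ and $f_2(\mathcal{M}\cap[0,1])$ are in fact disjoint, since the only possible meeting point $\tfrac12$ would require $0$ or $1$ to belong to $\mathcal{M}$, which they do not; this makes the verification transparent and rules out any overlap correction.
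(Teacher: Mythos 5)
Your proof is correct, but there is nothing in the paper to compare it against: the authors do not prove Theorem \ref{Kahane} at all — it is quoted as a known result of Kahane \cite{K} (see also the survey \cite{AK}) and used as a black box in the corollaries that follow. What your argument buys is a self-contained derivation inside the paper's own framework. The bound $T(x)=\sum_{n\geq 1}\bigl(g_{2n-1}(x)+g_{2n}(x)\bigr)\leq \sum_{n\geq 1}2^{-(2n-1)}=\tfrac{2}{3}$, with the equality analysis run through the equality clause of Lemma \ref{sec:main-theorem-3} and the identity $g'_n(x)=1-2a_n$ recorded after Lemma \ref{sec:main-theorem-2}, is exactly the digit-pair mechanism the authors built for Proposition \ref{sec:main-theorem-1}, so the combinatorial description of $\mathcal{M}$ (and the maximum value $\tfrac23$) drops out of machinery already in the paper; Kahane's original proof is essentially the same pair-by-pair digit analysis without the $g_n$ formalism. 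Your dimension computation — identifying $\mathcal{M}\cap[0,1]$ with $\bigl\{\sum_{n\geq 1}b_n4^{-n}:b_n\in\{1,2\}\bigr\}$, the attractor of the two ratio-$\tfrac14$ similarities, checking the open set condition, and applying Moran's theorem to get $s=\frac{\log 2}{\log 4}=\frac12$ — is sound, and countable stability of Hausdorff dimension takes care of the integer translates when you pass from $[0,1)$ to all of $\mathbb{R}$; it would be worth saying that explicitly rather than leaving it under ``it suffices to argue on $[0,1)$.''

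Two small tightenings, neither a gap. First, ``infinitely many nonzero digits and is thus non-dyadic'' is not quite the right reason: $0.0111\dotsc=\tfrac12$ has infinitely many nonzero digits. What you actually have is stronger — each pair contributes exactly one $0$ and one $1$, so the expansion has infinitely many zeros \emph{and} infinitely many ones, which excludes both the terminating and the all-ones-tail expansions of a dyadic rational. Second, to legitimately call $\mathcal{M}\cap[0,1]$ ``genuinely the self-similar attractor'' you should note it is nonempty and compact (it is the closed level set $\{T=\tfrac23\}$ of a continuous function intersected with $[0,1]$, or a continuous image of $\{1,2\}^{\mathbb{N}}$), since Hutchinson's uniqueness of the compact invariant set is what converts your set equation $K=f_1(K)\cup f_2(K)$ into the identification needed for Moran's formula.
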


From Theorem \ref{sec:main-theorem-4} we deduce that  
$\partial ^+T(x)\neq \varnothing$ if and only if $x$ satisfies either condition $(2)$ or $(3)$; in other words,
if and only if there exists $m\geq 1$, such
that $a_{m+2i}+a_{m+2i+1}=1$ for every $i\geq 0$. With the aim of studying these points we introduce the following 
notations: 
\begin{equation*}
\hat{x}_m=\sum_{j=1}^{m-1}\frac{a_j}{2^j}, \quad
\tilde{x}_m=\sum_{j=m}^{\infty}\frac{a_j}{2^j},\quad
\widehat{T}=\sum_{j=1}^{m-1}g_j, \quad \widetilde{T}=\sum_{j=m}^{\infty}g_j
\end{equation*}
and
\begin{equation*}
c_x=m-1- 2\sum_{j=1}^{m-1} a_j.
\end{equation*}

We have the following result.
\begin{cor}
The set $\mathcal{A}$ of points $x$ satisfying  $\partial ^+T(x)\neq \varnothing$
 is an uncountable Lebesgue null set with Hausdorff dimension $\frac{1}{2}$.
Moreover, for every $x\in \mathcal{A}$, the function $T(y)-c_x(y-x)$ attains a 
local maximum at $x$.
\end{cor}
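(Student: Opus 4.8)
The plan is to exploit the self-similarity of $T$ in order to transfer Kahane's dimension computation for $\mathcal{M}$ to $\mathcal{A}$, and to read off the local maximum directly from the splitting $T=\widehat{T}+\widetilde{T}$.

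First I record the characterization already extracted from Theorem \ref{sec:main-theorem-4}: $x\in\mathcal{A}$ if and only if $x\notin D$ and there is some $m\geq 1$ with $a_{m+2i}+a_{m+2i+1}=1$ for all $i\geq 0$. For a fixed $m$ let $\mathcal{A}_m$ denote the set of such $x$, so that $\mathcal{A}=\bigcup_{m\geq 1}\mathcal{A}_m$. The key observation is the scaling identity $\widetilde{T}(y)=\sum_{j\geq m}g_j(y)=2^{-(m-1)}T(2^{m-1}y)$, which follows from $g_j(y)=2^{-(j-1)}\phi(2^{j-1}y)$ and a reindexing $j=m+l$. Writing the fractional part of $2^{m-1}x$ in binary, its digits are exactly $b_l=a_{m-1+l}$, and the condition $a_{m+2i}+a_{m+2i+1}=1$ translates into $b_{2n-1}+b_{2n}=1$ for all $n\geq 1$, regardless of the parity of $m$. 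Hence $x\in\mathcal{A}_m$ if and only if $2^{m-1}x\in\mathcal{M}\pmod 1$, equivalently $\widetilde{T}$ attains its maximum value at $x$. In set-theoretic terms $\mathcal{A}_m=2^{-(m-1)}\bigl(\mathcal{M}+\mathbb{Z}\bigr)$.

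Once this is in place the measure-theoretic assertions are soft. The affine maps $y\mapsto 2^{m-1}y-k$ are bi-Lipschitz, so each $\mathcal{A}_m$ is a countable union of bi-Lipschitz images of $\mathcal{M}$; by Kahane's Theorem \ref{Kahane} each such image has Hausdorff dimension $\tfrac12$ and, having dimension smaller than $1$, is Lebesgue null. Since $\mathcal{A}=\bigcup_m\mathcal{A}_m$ is a countable union, $\mathcal{A}$ is Lebesgue null, and by the countable stability of Hausdorff dimension $\dim_H\mathcal{A}=\sup_m\dim_H\mathcal{A}_m=\tfrac12$. Finally $\mathcal{A}\supseteq\mathcal{M}$ (the case $m=1$), and $\mathcal{M}$ is uncountable because it has positive Hausdorff dimension; hence $\mathcal{A}$ is uncountable.

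For the local maximum, fix $x\in\mathcal{A}$ and a valid $m$, and recall $c_x=G'_{m-1}(x)=\sum_{j=1}^{m-1}(1-2a_j)$. Since $x\notin D$, the point $x$ lies in the open component $(x_{m-1},y_{m-1})$ of $\mathbb{R}\setminus D_{m-1}$, on which every $g_j$ with $j\leq m-1$ is affine; thus for $y$ in this neighborhood $\widehat{T}(y)=\widehat{T}(x)+c_x(y-x)$. On the other hand $\widetilde{T}$ attains its global maximum at $x$ by the previous paragraph, so $\widetilde{T}(y)\leq\widetilde{T}(x)$ for all $y$. Adding the two relations gives $T(y)=\widehat{T}(y)+\widetilde{T}(y)\leq T(x)+c_x(y-x)$ on that neighborhood, i.e. $T(y)-c_x(y-x)\leq T(x)$, which is exactly the asserted local maximum. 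Equivalently, $\widetilde{T}(y)\leq\widetilde{T}(x)$ is the pairing estimate $g_{m+2i}(y)+g_{m+2i+1}(y)\leq 2^{-(m+2i)}$ of Lemma \ref{sec:main-theorem-3}, which is an equality at $x$. The main obstacle is purely bookkeeping: correctly matching the tail condition with membership in $\mathcal{M}$ under the rescaling, taking care of the parity of $m$ and of the fact that points of $\mathcal{A}$ and of $\mathcal{M}$ are non-dyadic, so that their binary expansions are unambiguous. Once the identity $\mathcal{A}_m=2^{-(m-1)}(\mathcal{M}+\mathbb{Z})$ is secured, the remaining facts are standard (bi-Lipschitz invariance and countable stability of Hausdorff dimension, and uncountability of positive-dimensional sets) together with Theorem \ref{Kahane} used as a black box.
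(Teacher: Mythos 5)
Your proposal is correct and takes essentially the same route as the paper: the identification $\mathcal{A}=\bigcup_m \mathcal{A}_m$ with $\mathcal{A}_m=2^{-(m-1)}(\mathcal{M}+\mathbb{Z})=D_m+2^{-(m-1)}\mathcal{M}$ combined with Kahane's theorem for the dimension and nullity claims, and the splitting $T=\widehat{T}+\widetilde{T}$, with $\widehat{T}$ affine near $x$ of slope $c_x$ and $\widetilde{T}$ globally maximized at $x$ via the scaling identity $\widetilde{T}(y)=2^{-(m-1)}T(2^{m-1}y)$, which is just a cleaner packaging of the paper's periodicity-plus-rescaling computation. The only flaw is a harmless off-by-one: $g_{m-1}$ has a breakpoint at the midpoint $c_{m-1}\in D_m$, which lies inside $(x_{m-1},y_{m-1})$, so the $g_j$ with $j\leq m-1$ are simultaneously affine only on the component $(x_m,y_m)$ of $\mathbb{R}\setminus D_m$ containing $x$; replacing your neighborhood by $(x_m,y_m)$ fixes the statement and changes nothing else in the argument.
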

\begin{proof}
For the first part it is enough to observe that we can write
\begin{align*}
\mathcal{A}=& \bigcup_{m=1}^{\infty}\Bigl\{ x: a_{m+2i}+a_{m+2i+1}=1 \
              \text{for all} \ i\geq 0 \Bigr\} \\= &
\bigcup_{m=1}^{\infty}\Bigl( D_{m}+\frac{1}{2^{m-1}} \mathcal{M} \Bigr) 
\end{align*}
and then apply Theorem \ref{Kahane}.

For the second part, if $x\in A$ then $x=\hat{x}_m+\frac{1}{2^{m-1}}z$  
with $\hat{x}_m\in D_{m}$ and $z\in \mathcal{M}$. Observe that, as 
$\widetilde{T}$ is $2^{-(m-1)}$-periodic,
\begin{align*}  \widetilde{T}(x)=&\widetilde{T}\left(\frac{1}{2^{m-1}}z\right)=\sum_{j=m-1}^{\infty}\frac{1}{2^j}\phi
  (2^{j-m+1}z) \\= & \sum_{k=0}^{\infty}\frac{1}{2^{k+m-1}}\phi (2^kz)=
  \frac{1}{2^{m-1}}T(z).
\end{align*}
Similarly, for every $y$,
\begin{equation*}
\widetilde{T}(y)=\frac{1}{2^{m-1}}T(z_y)
\end{equation*}
where $z_y=2^{m-1}\tilde{y}_m$. Hence $\widetilde{T}(x)\geq \widetilde{T}(y)$ for every $y$, which 
implies
\begin{equation*}
T(x)=\widehat{T}(x)+\widetilde{T}(x)\geq \widehat{T}(x)+\widetilde{T}(y)=\widehat{T}(y)-c_x(y-x)+\widetilde{T}(y)
\end{equation*}
for $y$ near to $x$ since $\widehat{T}$ is linear near $x$, with $\widehat{T}'(x)=c_x$.
\end{proof}

Next corollary, whose proof is immediate,  allows us to characterize the local maxima of $T$.
Of course this result is well known, see \cite{AK} or \cite{K} for instance.

\begin{cor}
The Takagi function has a local maximum at $x$ if and only if $x\in \mathcal{A}$
and $c_x=0$.
\end{cor}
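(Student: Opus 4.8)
The plan is to treat the two implications separately, using the preceding corollary for the easy direction and a testing argument along the maximum set $\mathcal M$ for the converse. For the implication $(\Leftarrow)$ I would argue in one line: if $x\in\mathcal A$ and $c_x=0$, then the preceding corollary asserts that $y\mapsto T(y)-c_x(y-x)$ has a local maximum at $x$; since $c_x=0$ this function is just $T$, so $T$ has a local maximum at $x$.

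For $(\Rightarrow)$, assume $T$ has a local maximum at $x$. First I would observe that the zero slope super-supports $T$: from $T(x+h)-T(x)\le 0$ for all small $h$ we get $\limsup_{h\to 0}\frac{T(x+h)-T(x)}{|h|}\le 0$, i.e. $0\in\partial^+T(x)$. Hence $\partial^+T(x)\neq\varnothing$ and Theorem \ref{sec:main-theorem-4} gives $x\in\mathcal A$. To identify the slope I would exploit the identity from the proof of the preceding corollary, $T(y)-T(x)=c_x(y-x)+\big(\widetilde T(y)-\widetilde T(x)\big)$, valid for $y$ near $x$, together with the fact that $\widetilde T(y)\le\widetilde T(x)$ for every $y$, with equality exactly when $z_y=2^{m-1}\tilde y_m\in\mathcal M$. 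Evaluating the local-maximum inequality $T(y)\le T(x)$ along a sequence $y_k\to x$ with $z_{y_k}\in\mathcal M$ makes the parenthesis vanish and leaves $c_x(y_k-x)\le 0$; if such sequences can be chosen approaching $x$ from both sides, then $c_x\ge 0$ and $c_x\le 0$, whence $c_x=0$.

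The step I expect to be the main obstacle is the two-sided approximation in the last sentence. It amounts to $z_x$ being a two-sided accumulation point of $\mathcal M$, which by Theorem \ref{Kahane} holds for every $z_x\in\mathcal M$ except the two extreme points $\tfrac13$ and $\tfrac23$; these correspond precisely to the (countable) points $x$ whose tail is eventually alternating, i.e. the points falling under case (2) of Theorem \ref{sec:main-theorem-4}, where $\partial^+T(x)$ is a nondegenerate interval rather than a singleton. At such $x$ the set $\mathcal M$ can be reached only from one side, so the one-sided test yields just a single inequality on $c_x$; I would then bring in the extra sign information furnished by the local maximum, namely $D^+T(x)\le 0\le d_-T(x)$, and read off the admissible value of $c_x$ from the explicit form of $D^+T(x)$ and $d_-T(x)$ recorded in Propositions \ref{sec:introduction-1} and \ref{sec:main-theorem-1}. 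For the generic (case (3)) points this last step is vacuous, since there $\partial^+T(x)=\{c_x\}$ and $0\in\partial^+T(x)$ already forces $c_x=0$.
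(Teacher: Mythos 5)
Your $(\Leftarrow)$ direction and the first step of $(\Rightarrow)$ are exactly the paper's intended ``immediate'' proof: a local maximum gives $0\in\partial^{+}T(x)$ straight from the definition of the superdifferential, Theorem \ref{sec:main-theorem-4} then yields $x\in\mathcal{A}$, and the preceding corollary, with $c_x=0$, gives the converse. For case (3) points your own closing remark ($\partial^{+}T(x)=\{c_x\}$, so $0\in\partial^{+}T(x)$ forces $c_x=0$) already finishes the proof, which makes the whole testing argument along $\mathcal{M}$ superfluous; it is nevertheless sound there, since a non--eventually-alternating tail contains infinitely many aligned pairs $01$ and infinitely many aligned pairs $10$, so $z_x$ is indeed a two-sided accumulation point of $\mathcal{M}$. (A side slip: the one-sided points of $\mathcal{M}$ are not only $\tfrac13$ and $\tfrac23$ but all of the countably many points of $\mathcal{M}$ with eventually alternating expansion; your parenthetical gloss gets this right even though the sentence asserting ``except the two extreme points'' does not.)

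The genuine gap is the case (2) endgame. There $\partial^{+}T(x)$ is an interval with two consecutive integer endpoints, so the sign information $D^{+}T(x)\le 0\le d_{-}T(x)$ extracted from Propositions \ref{sec:introduction-1} and \ref{sec:main-theorem-1} says precisely that $0$ is one of the two endpoints, and nothing more: it cannot ``read off'' the value of $c_x$. What your plan is missing is that $c_x$ is not canonically defined: it depends on the admissible index $m$, is unchanged under $m\mapsto m+2$, and shifts by $1-2a_{m}=\pm1$ under $m\mapsto m+1$ (both alignments are admissible exactly at the case (2) points), so the two endpoints of $\partial^{+}T(x)$ are precisely the two possible values of $c_x$. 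The corollary must therefore be read existentially --- \emph{some} admissible $m$ gives $c_x=0$ --- and with that reading $0\in\partial^{+}T(x)$ closes case (2) with no Dini-derivative computation at all. Under a fixed convention such as the least admissible $m$, the statement you are trying to prove is actually false: for $x$ with binary digits $110\,\overline{01}$ the least admissible index is $m=2$, giving $c_x=-1$, yet $T$ has a local maximum at $x$, because $m=5$ is also admissible, gives $c_x=0$, and the preceding corollary applies; here $D^{+}T(x)=-1$ and $d_{-}T(x)=0$, so no combination of one-sided tests along $\mathcal{M}$ and sign conditions can ever output $0$ for the $m=2$ normalization of $c_x$.
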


\begin{cor}
The Takagi function has empty superdifferential and
subdifferential almost everywhere.
\end{cor}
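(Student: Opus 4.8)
The plan is to treat the subdifferential and the superdifferential separately, since the exceptional set for each has already been identified and shown to be Lebesgue null. For the subdifferential, I would invoke the result of Gora and Stern \cite{GS} recalled at the beginning of Section \ref{sec:main-theorem}: $\partial T(x)=\varnothing$ for every $x\notin D$, while $\partial T(x)=\mathbb{R}$ for every $x\in D$. The key observation is then that $D=\bigcup_n D_n$ is a countable set, being a countable union of the countable sets $D_n$, and hence has Lebesgue measure zero. Consequently $\partial T(x)=\varnothing$ for all $x$ outside a null set, which is precisely the claim for the subdifferential.

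For the superdifferential I would appeal directly to the corollary immediately preceding this one, which establishes that the set $\mathcal{A}=\{x:\partial^+T(x)\neq\varnothing\}$ is an uncountable Lebesgue null set of Hausdorff dimension $\tfrac12$. Since $\partial^+T(x)=\varnothing$ for every $x\notin\mathcal{A}$ and $\mathcal{A}$ is null, the superdifferential is empty almost everywhere. Combining the two statements, both $\partial T(x)$ and $\partial^+T(x)$ are empty for every $x$ outside the union $D\cup\mathcal{A}$, which is still a Lebesgue null set as a union of two null sets.

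The point I would emphasize is that no new analysis is required here: the entire content is the observation that the two exceptional sets, $D$ for the subdifferential and $\mathcal{A}$ for the superdifferential, are both Lebesgue negligible. In this sense there is no genuine obstacle; the only subtlety worth stating explicitly is that ``almost everywhere'' is meant with respect to one-dimensional Lebesgue measure, and that a finite (indeed countable) union of null sets remains null, so that the two conclusions can be asserted simultaneously on the complement of $D\cup\mathcal{A}$.
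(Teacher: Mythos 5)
Your proof is correct and is exactly the argument the paper intends (the paper states this corollary without proof, treating it as an immediate consequence): the Gora--Stern result handles the subdifferential off the countable set $D$, the preceding corollary handles the superdifferential off the null set $\mathcal{A}$, and the union $D\cup\mathcal{A}$ is Lebesgue null. Nothing is missing.
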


\bigskip

\vfill


\begin{thebibliography}{}

 

  \bibitem{AK} P.C. Alaart and K. Kawamura, The Takagi function: a survey, \textit{Real Anal. Exchange}
    \textbf{37} (2012), 1--54.


\bibitem{F}
J.~Ferrera.
\newblock {\em An introduction to nonsmooth analysis}.
\newblock Elsevier/Academic Press, Amsterdam, 2014.

  \bibitem{GS}
P.~G\'ora and R.~J. Stern.
\newblock Subdifferential analysis of the {V}an der {W}aerden function.
\newblock {\em J. Convex Anal.}, 18(3): 699--705, 2011.

\bibitem{K}
J.P. Kahane, Sur l'example done\'e par M. de Rham d'une fonction continue
sans deriv\'ee. \textit{Enseignement Math.}  \textbf{5} (1959) 53--57.

\bibitem{Lagarias}
J.C. Lagarias,  The Takagi function and its properties, \textit{RIMS
  K\^{o}ky\^{u}roku Besatsu} \textbf{B34} (2012), 153--189.

\bibitem{RW}
R.~T. Rockafellar and R.~J.-B. Wets.
\newblock {\em Variational analysis}, volume 317 of {\em Grundlehren der
  Mathematischen Wissenschaften [Fundamental Principles of Mathematical
  Sciences]}.
\newblock Springer-Verlag, Berlin, 1998.

\bibitem{Takagi}
T.~Takagi.
\newblock A simple example of continuous function without derivative.
\newblock {\em Proc. Phy. Math. Soc. Japan}, 1: 176--177, 1903.



  \end{thebibliography}
\end{document}